\documentclass{article}
\usepackage[T2A]{fontenc}
\usepackage[cp1251]{inputenc} 
\usepackage[english]{babel}
\usepackage{amsfonts,amssymb,mathrsfs,amscd,amsmath}
\usepackage{amsthm,graphicx}

\newtheorem{theorem}{Theorem}

\theoremstyle{definition}
\newtheorem{example}{Example}

\def\R{{\mathbb R}}
\def\N{{\mathbb N}}

\begin{document}

\title{On an invariant of pure braids}

\author{V.\,O.~Manturov, I.\,M.~Nikonov}
\date{}

\maketitle

Consider a pure braid $\beta$. It can be expressed as $n$ disjoint strands $X(t)=(x_1(t),\dots, x_n(t))\in(\R^2)^n$, $t\in[0,1]$ where $X(0)=X(1)$. We assume $\beta$ to be generic. This means that there are finite number of values $0<t_1<\cdots<t_m<1$ such that for $t\ne t_k$, $k=1,\dots, m$, neither four points of $X(t)$ lie on one circle or line, and the set $X(t_k)$, $k=1,\dots, m$ contains exactly one quadruple of points which lie on one circle (or line). Then for any regular value $t\ne t_k$, the set $X(t)$ determines a unique Delaunay triangulation $G(t)$.

Fix an integer $r\ge 3$. Denote $q=e^{\frac{i\pi}r}\in\mathbb C$. For any regular value $t$, consider the set $F_r(t)=F_r(G(t))$ of admissible colourings of the edges of the Delaunay triangulation $G(t)$, i.e. maps $f: E(G(t))\to\N\cup\{0\}$ such that for any triangle of $G(t)$ with edges $a,b,c$ one has
\begin{enumerate}
\item $f(a)+f(b)+f(c)$ is even;
\item $f(a)+f(b)\ge f(c)$, $f(a)+f(c)\ge f(b)$, $f(b)+f(c)\ge f(a)$ ;
\item $f(a)+f(b)+f(c)\le 2r-4$.
\end{enumerate}
Let $V_r(t)$ be the linear space with the basis $F_r(t)$.

\begin{figure}[h]
\centering\includegraphics[width = 0.4\textwidth]{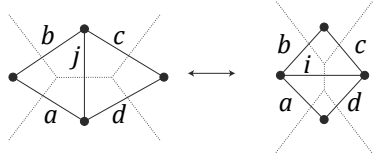}
\caption{Transformation of Delaunay triangulation}\label{tiling_change}
\end{figure}

Consider a singular value $t_k$. At the moment $t_k$ a transformation (flip) of the triangulation occurs (Fig.~\ref{tiling_change}). By recoupling theorem~\cite[Theorem 6.2]{KL} one can define a linear operator $A_r(t_k): V_r(X(t_k-0))\to V_r(X(t_k+0))$ whose coefficients are $q-6j$-symbols. More detailed, the triangulations $G(t_k-0)$ and $G(t_k+0)$ differ by an edge: $j=E(G(t_k-0))\setminus E(G(t_k+0))$, $i=E(G(t_k+0))\setminus E(G(t_k-0))$. Let $a,b,c,d$ be the edges incident to same triangles as $i$ or $j$. For any $f\in F_r(t_k-0)$ considered as a basis element of $V_r(t_k-0)$, we set
\begin{equation}\label{eq:action}
A_r(t_k)f=\sum_{g\in F_r(t_k-0)\colon f=g|_{E(G(t_k-0))\cap E(G(t_k+0))}}
\left\{\begin{array}{ccc}
g(a) & g(b) & g(i)\\
f(c) & f(d) & f(j)
\end{array}\right\}_q\cdot g,
\end{equation}
where $\left\{\begin{array}{ccc}
\alpha & \beta & \xi\\
\gamma & \delta & \eta
\end{array}\right\}_q$ is the $q-6j$-symbol (for the definition see~\cite[Proposition 11]{KL}).

Let
\begin{equation}\label{eq:operatorA}
A_r(\beta)=\prod_{k=1}^m A_r(t_k): V(X)\to V(X)
\end{equation}
 be the composition of operators $A(t_k)$.

\begin{theorem}\label{thm:recoupling_braid_invariant}
The operator $A_r(\beta)$ is a pure braid invariant.
\end{theorem}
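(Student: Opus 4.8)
The plan is to reinterpret the statement as a fact about homotopy of loops in a stratified configuration space, and then to reduce invariance to a short list of local identities for $q$-$6j$-symbols. Let $C=\mathrm{Conf}_n(\R^2)$ be the space of configurations of $n$ ordered distinct points in the plane, and let $\Sigma\subset C$ be the discriminant consisting of configurations admitting an \emph{empty} circle (or line) through four of the points; these are exactly the configurations at which the Delaunay triangulation flips. On each connected component (chamber) of $C\setminus\Sigma$ the triangulation $G$ is constant, so $V_r$ is a locally constant assignment of vector spaces, and a generic pure braid $\beta$ is a loop in $C$ transverse to $\Sigma$ that crosses a finite sequence of walls; $A_r(\beta)$ is the composition of the flip operators \eqref{eq:action} along this sequence. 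A pure-braid isotopy with fixed endpoints is precisely a homotopy rel basepoint of such loops, during which $G(0)$ and hence $V(X)$ stay fixed, so it suffices to show that $A_r(\beta)$ is unchanged under a generic homotopy of the loop.

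First I would make the homotopy generic (a Cerf-type argument), so that it sweeps a one-parameter family of loops meeting $\Sigma$ in a $1$-manifold and meeting the codimension-two part of $\Sigma$ transversally in finitely many points. The combinatorial sequence of flips changes only as the family passes one of these codimension-two events, so the whole proof reduces to checking that $A_r(\beta)$ is invariant across each type of event. I would classify them as follows:
\begin{enumerate}
\item a \emph{tangency} of the loop to a wall, where a cancelling pair of flips (a flip $j\to i$ immediately followed by its reverse $i\to j$ on the same quadrilateral) is created or destroyed;
\item a \emph{transverse double wall}, where two independent empty-circle quadruples sharing none of the flipped edges become cocircular at once, so that two adjacent flips act on disjoint edges;
\item a \emph{pentagon point}, where five points lie on one common empty circle, the Delaunay triangulation is momentarily non-unique, and a small loop around the event runs through the five triangulations of a convex pentagon joined cyclically by five flips.
\end{enumerate}

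Each type is then settled by the corresponding identity of the recoupling theory of~\cite{KL}, which I may invoke as stated there. Type (1) follows from the \emph{orthogonality} relation for $q$-$6j$-symbols, which says precisely that the matrix of the flip $i\to j$ is inverse to that of $j\to i$, so a cancelling pair contributes the identity. Type (2) is immediate: flips on disjoint edges are given by $6j$-factors in disjoint variables, hence the two operators commute and their order is irrelevant. Type (3) is the heart of the matter and is governed by the \emph{Biedenharn--Elliott (pentagon) identity}, which asserts exactly that the two ways of traversing the pentagon of triangulations (two flips versus three flips) induce the same map, equivalently that the monodromy around the five-cocircular stratum is trivial. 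Since every generic homotopy is a concatenation of such elementary moves, $A_r(\beta)$ is unchanged, and invariance follows.

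The main obstacle I expect is the geometric bookkeeping rather than the algebra: verifying that the above list of codimension-two strata is exhaustive (including the degenerate cases where the empty circle becomes a line and where cocircular quadruples share points, which must be shown to reduce to cases (2) or (3)), that only the empty-circle part $\Sigma$ of the full cocircularity locus produces flips, and, most delicately, that the admissibility constraints (conditions 1--3 defining $F_r$) make the index sets of the flip sums at a pentagon point coincide exactly with the summation ranges in the Biedenharn--Elliott identity, so that the pentagon relation applies verbatim. Once the correspondence between the five Delaunay flips of a cyclic pentagon and the five $q$-$6j$-symbols of the pentagon identity is pinned down, the theorem follows.
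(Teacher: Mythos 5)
Your proof takes essentially the same route as the paper: treat a pure-braid isotopy as a generic homotopy of loops in configuration space, reduce invariance to the finitely many codimension-two events, and settle the two-independent-quadruples event by commutativity of flips on disjoint edges and the five-cocircular-points event by the Biedenharn--Elliott pentagon identity of~\cite[Proposition 10]{KL}. The one substantive difference is your event (1): the paper's genericity list contains only the two-quadruple and five-point configurations and never mentions the tangency event at which a cancelling pair of flips (a flip immediately followed by its inverse) is born or dies; you correctly identify this as a genuine stratum of a generic homotopy and settle it by the orthogonality relation for $q$-$6j$-symbols, which says the matrices of the flips $j\to i$ and $i\to j$ are mutually inverse. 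This case is in fact needed --- without it the number of wall crossings could not change along the homotopy --- and the paper implicitly concedes the point later, since the axioms of a general pentagon tuple include exactly this orthogonality identity alongside the pentagon identity. So your write-up is, on this point, more careful than the paper's own proof; your remaining concerns (exhaustiveness of the stratum list, degenerate circles becoming lines, matching the admissibility constraints to the summation ranges in the pentagon identity) are the right things to check and are precisely what the paper leaves implicit in its appeal to genericity and to~\cite{KL}.
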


\begin{proof}
Consider a pure braid isotopy $\beta_s$, $s\in[0,1]$. We can assume the isotopy is generic. This means that the isotopy includes a finite number of values $s_l$ such that the braid $\beta_{s_l}$ has a unique value $t^*$ such that the set $X_s(t^*)$ contains one of the following singular configurations
\begin{enumerate}
\item five points which belong to one circle (or line);
\item two quadruples of points which lie on one circle. The circles for the quadruples are different.
\end{enumerate}
The first case leads to the pentagon relation for Delaunay triangulation transformations (Fig.~\ref{flip_5-gon}). This relation induces a pentagon relation for the operator $A_r(\beta)$ which holds by theorem~\cite[Proposition 10]{KL}.
\begin{figure}[h]
\centering\includegraphics[width = 0.6\textwidth]{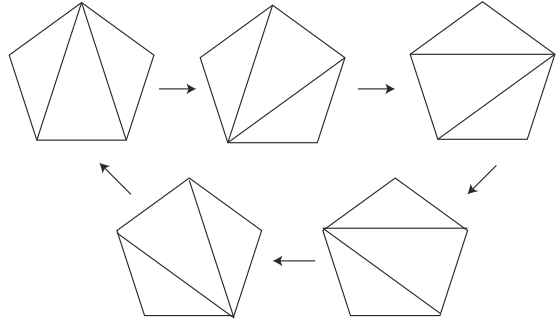}
\caption{Pentagon relation}\label{flip_5-gon}
\end{figure}

The second case leads to commutativity relation $A_1A_2=A_2A_1$ where $A_1$, $A_2$ correspond to flips on the given quadruples of points. The relation holds by construction of the operators $A_1$, $A_2$.
\end{proof}

Thus, we get a construction analogous to that in~\cite[Section 16.6]{FKMN} which uses Ptolemy relation.

\begin{example}
Consider the point configuration $P_1=(0,0)$, $P_2=(\frac 13,0)$, $P_3=(-\frac 12,\frac{\sqrt 3}2)$, $P_3=(-\frac 12,-\frac{\sqrt 3}2)$, $P_5=(1,0)$. Consider the pure braid $\beta$ given by dynamics of $P_2$ moving around $P_1$: $P_2(t)=(\frac 13\cos t, \frac 13\sin t)$, $t\in[0,2\pi]$. The dynamics produces six different Delaunay triangulations. For $r=4$, each triangulation has $160$ admissible colourings. The operator $A_4(\beta)$ of dimension $160$ is not identity (it has the eigenvalue $-1$ of multiplicity $20$). Thus, the invariant $A_r$ is not trivial.
\end{example}

Let us define a generalization of the construction described above. Consider a pentagon tuple $\Pi=(X,C,\{\cdot\})$ where
\begin{itemize}
\item $X$ is an arbitrary set (set of~\emph{labels});
\item $C\subset X^{\times 3}$ is a subset (\emph{admissible condition set}) which is invariant under the natural action of the symmetry group $\Sigma_3$ on $X^{\times 3}$;
\item $\{\cdot\}:X^{\times 6}\to\R$ is a function (\emph{$6j$-symbol}) such that
\begin{itemize}
\item $\left\{\begin{array}{ccc}
a & b & i\\
c & d & j
\end{array}\right\}\ne 0$ implies $(a,b,j),(c,d,j),(a,d,i),(b,c,i)\in C$;
\item
\[
\sum_{i\in X}  \left\{\begin{array}{ccc}
a & b & i\\
c & d & j
\end{array}\right\}
\left\{\begin{array}{ccc}
d & a & k\\
b & c & i
\end{array}\right\}=
\left\{
\begin{array}{cl}
1,& k=j\mbox{ and }(a,b,j),(c,d,j)\in C,\\
0,& \mbox{otherwise};
\end{array}
\right.
\]
\item
\begin{multline*}
\sum_{m\in X}  \left\{\begin{array}{ccc}
a & i & m\\
d & e & j
\end{array}\right\}
\left\{\begin{array}{ccc}
b & c & l\\
d & m & i
\end{array}\right\}
\left\{\begin{array}{ccc}
b & l & k\\
e & a & m
\end{array}\right\}=\\
\left\{\begin{array}{ccc}
b & c & k\\
j & a & i
\end{array}\right\}
\left\{\begin{array}{ccc}
k & c & l\\
d & e & j
\end{array}\right\}
\end{multline*}
\end{itemize}
\end{itemize}

Given a pentagon tuple $\Pi=(X,C,\{\cdot\})$, let us define a pure braid invariant. For a Delaunay triangulation $G(t)$, consider the set $F_\Pi(t)$ of admissible colourings, i.e. maps $f: E(G(t))\to X$ such that for any triangle with edges $a,b,c$ the condition $(f(a),f(b),f(c))\in C$ holds. Let $V_\Pi(t)$ be the linear space generated by $F_\Pi(t)$.

For a pure braid $\beta$, define the operator $A_\Pi(\beta)\in End(V_\Pi(0))$ by the formulas analogous to the formulas~\eqref{eq:action} and~\eqref{eq:operatorA}.

\begin{theorem}
The operator $A_\Pi(\beta)$ is a pure braid invariant.
\end{theorem}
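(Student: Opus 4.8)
The plan is to reuse, essentially word for word, the argument of Theorem~\ref{thm:recoupling_braid_invariant}, replacing its input about $q$-$6j$-symbols by the three axioms of the pentagon tuple $\Pi$: the support condition, the orthogonality relation, and the pentagon relation. First I would define, for each singular value $t_k$, the flip operator $A_\Pi(t_k)\colon V_\Pi(t_k-0)\to V_\Pi(t_k+0)$ by the formula obtained from \eqref{eq:action} upon substituting the abstract symbol $\{\cdot\}$ for the $q$-$6j$-symbol, and set $A_\Pi(\beta)$ to be their composition as in \eqref{eq:operatorA}. The one point needing verification here is that $A_\Pi(t_k)$ really maps into $V_\Pi(t_k+0)$: a summand indexed by a colouring $g$ of $G(t_k+0)$ survives only when its $6j$-symbol is nonzero, and then the support condition (the first axiom) forces the two triples belonging to the triangles of $G(t_k+0)$ adjacent to the new edge $i$ to lie in $C$. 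Since $g$ agrees with the admissible $f$ on all remaining edges, $g$ is itself admissible, so the operators and their product are well defined.

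For the invariance I would fix a generic pure-braid isotopy $\beta_s$ and track how the sequence of flips of $\beta_s$ changes as $s$ crosses its finitely many critical values, exactly as in the proof of Theorem~\ref{thm:recoupling_braid_invariant}. Three local events can occur. If the configuration meets two independent cocircular quadruples at once (the second singular configuration listed in that proof), the two flips act on disjoint sets of edges, hence the corresponding operators commute by construction. If five points become cocircular (the first configuration, Fig.~\ref{flip_5-gon}), the two arcs of the associated pentagon of triangulations --- a product of three flips along one arc and of two flips along the other --- must give the same operator; identifying the five boundary edges with $a,b,c,d,e$ and the diagonals of the intermediate triangulations with $i,j,k,l,m$, this is precisely the pentagon relation (the third axiom). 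Finally, when the path $t\mapsto X_s(t)$ becomes tangent to the cocircular locus, a flip is immediately followed by its reverse and the pair is created or annihilated; that such a composition acts as the identity on $V_\Pi$ is exactly the orthogonality relation (the second axiom), with the intermediate edge playing the role of the summation index. Between consecutive critical values the combinatorial flip sequence, and hence $A_\Pi(\beta)$, is constant; since each of the three events leaves $A_\Pi(\beta)$ unchanged, the operator is invariant along the whole isotopy.

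The step I expect to cost the most care is the combinatorial bookkeeping in the second paragraph: for each of the three events one must match the edges of the Delaunay triangulations in Figures~\ref{tiling_change} and~\ref{flip_5-gon} with the labels appearing in the orthogonality and pentagon identities, and check that the orientation conventions make the substitution literal rather than only correct up to a permutation of arguments --- this is where the $\Sigma_3$-invariance of $C$ is used. The geometric input, namely that these are the only codimension-two degenerations of a generic isotopy, is inherited unchanged from Theorem~\ref{thm:recoupling_braid_invariant} and requires no new proof.
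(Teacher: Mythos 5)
Your proposal is correct, and its skeleton is exactly the paper's: the authors dispose of this theorem with the single sentence that ``the proof repeats the proof of Theorem~\ref{thm:recoupling_braid_invariant}'', and that proof is the generic-isotopy argument you give, with the cocircular quintuple handled by the pentagon axiom and the simultaneous pair of cocircular quadruples handled by commutativity of flips on disjoint edge sets. Where you genuinely go beyond the paper is in two places. First, you check that the flip operator defined by the analogue of~\eqref{eq:action} actually lands in $V_\Pi(t_k+0)$, using the support axiom (that a nonzero symbol forces the four adjacent triples into $C$); the paper leaves this implicit in the phrase ``formulas analogous to~\eqref{eq:action}''. Second, and more substantively, you include a third local event: tangency of the path $t\mapsto X_s(t)$ to the cocircular discriminant, at which a flip--inverse-flip pair is born or dies, and you observe that invariance under this event is precisely the orthogonality axiom with the new diagonal as summation index. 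The paper's proof of Theorem~\ref{thm:recoupling_braid_invariant} lists only the two codimension-two configurations and never invokes orthogonality (unitarity of the $q$-$6j$-symbols) at all, even though such tangencies do occur in a generic one-parameter family of loops; your treatment thus explains why the orthogonality axiom is part of the definition of a pentagon tuple, and fills a case the paper's terser argument passes over in silence. In short: same route, but your version is the more complete one.
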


The proof repeats the proof of Theorem~\ref{thm:recoupling_braid_invariant}.

\begin{example}[Ptolemy relation]
$\Pi=(\R,C_{Pt},\{\cdot\}_{Pt})$ where $C=\R^{\times 3}$ and
\[
\left\{\begin{array}{ccc}
a & b & i\\
c & d & j
\end{array}\right\}=
\left\{
\begin{array}{cl}
1,& ab+cd=ij,\\
0,& ab+cd\ne ij;
\end{array}
\right.
\]
\end{example}

\begin{example}[Recoupling theory]
Fix an integer $r\ge 3$. Consider the tuple $\Pi=(\N\cup\{0\},C_{rc},\{\cdot\}_{rc})$ where $C_{rc}$ are the triples $a,b,c$ which obey the conditions of admissible colorings described above,
$\left\{\begin{array}{ccc}
a & b & i\\
c & d & j
\end{array}\right\}
$
are the $q-6j$ symbols defined above.
\end{example}

We express our gratitude to Louis H.Kauffman, V.G.Turaev and E.A.Mudraya for fruitful and stimulation discussions and Nikita D.Shaposhnik for pointing out some remarks.

%
%
%
%
%

\end{document}